\DeclarePairedDelimiter{\norm}{\lVert}{\rVert}
\newtheorem{theo}{Theorem}[section]
\newtheorem{lemma}[theo]{Lemma}
\newtheorem{prop}[theo]{Proposition}
\newtheorem{coro}[theo]{Corollary}
\newtheorem{rema}[theo]{Remark}
\newtheorem{exam}[theo]{Example}
\newtheorem{thm}[theo]{Theorem}
\newtheorem{defi}[theo]{Definition}
\begin{document}
\title[Bounded Generation by semi-simple elements: quantitative results]{Bounded Generation by semi-simple elements: quantitative results}

\author[P.~Corvaja]{Pietro Corvaja}
\author[J.~Demeio]{Julian L. Demeio}
\author[A.~Rapinchuk]{Andrei S. Rapinchuk}
\author[J.~Ren]{Jinbo Ren}
\author[U.~Zannier]{Umberto M. Zannier}

\address{Dipartimento di Scienze Matematiche, Informatiche e Fisiche, via delle Scienze,
206, 33100 Udine, Italy}

\email{pietro.corvaja@uniud.it}

\address{Max Planck Institute for Mathematics, Bonn, 53111, Germany}

\email{demeio@mpim-bonn.mpg.de}

\address{Department of Mathematics, University of Virginia,
Charlottesville, VA 22904-4137, USA}

\email{asr3x@virginia.edu}

\address{School of Mathematics, Institute for Advanced Study,
Princeton, NJ 08540, USA}

\email{jren@ias.edu}

\address{Scuola Normale Superiore, Piazza dei Cavalieri, 7, 56126 Pisa, Italy}

\email{umberto.zannier@sns.it}


\begin{abstract}
We prove that for a number field $F$, the distribution of the points of a set $\Sigma \subset \mathbb{A}_F^n$ with a purely exponential parametrization, for example a set of matrices boundedly generated by semi-simple (diagonalizable) elements, is of at most logarithmic size when ordered by height. As a consequence, one obtains that a linear group $\Gamma\subset \mathrm{GL}_n(K)$ over a field $K$ of characteristic zero admits a purely exponential parametrization if and only if it is finitely generated and the connected component of its Zariski closure is a torus. Our results are obtained via a key inequality about the heights of minimal $m$-tuples for purely exponential parametrizations. One main ingredient of our proof is Evertse's strengthening of the $S$-Unit Equation Theorem. 

\end{abstract}

\maketitle

\section{Statement of Main Results}
The purpose of this note is to annouce and sketch certain results in a future paper by the current authors \cite{CDRRZ}.

We start by the notion of bounded generation. An abstract group $\Gamma$ is said to have the  {\bf bounded generation} property (BG) if it can be written in the form
$$\Gamma=\langle \gamma_1 \rangle \cdots \langle \gamma_r \rangle$$
for certain fixed $\gamma_1,\dots, \gamma_r\in \Gamma$, where $\langle \gamma_i \rangle$ is the cyclic subgroup generated by $\gamma_i$. We refer the interested readers to the discussion in Section 1 of \cite{CRRZ} and the references therein for the motivation for
(BG). In \cite{CRRZ}, it was shown that a linear group $\Gamma \subset \mathrm{GL}_n(K)$ over a field $K$ of characteristic zero   ``usually lacks
   (BG) by semi-simple elements'', i.e. (BG) such that all $\gamma_i$ are diagonalizable. More precisely, it was shown in \cite{CRRZ} that if a linear group $\Gamma$ over a field of characteristic zero consists entirely of semi-simple elements, then $\Gamma$ has (BG) if and only if it is finitely generated and virtually abelian. In particular, if $K$ is a number field and $S$ is a finite set of places including all infinite ones, then \underline{infinite} $S$-arithmetic subgroups of absolutely almost simple $K$-anisotropic groups never have (BG).

The current paper will significantly strengthen the above results by providing some quantitative properties which describe the extent of the absence of (BG) by semi-simple elements. In fact, we will consider the following more flexible question in terms of purely exponential polynomial parametrizations (PEP). 

\vskip2mm

\noindent {\bf Definition.} {\it Let $\Sigma $ be a subset of a variety $V\subset \mathbb{A}_K^n$ ($K$ is a field). Then $\Sigma$ is said to have {\bf Purely Exponential Parametrization} (PEP) in $r$ variables if $\Sigma$ has shape
$$\Sigma=\Big\{ (f_1(\mathbf{n}),\dots,f_s(\mathbf{n}));\mathbf{n}\in \mathbb{Z}^r \Big\},$$
where each $f_i(\mathbf{x})=f(x_1,\dots, x_r)$ is a {\bf Purely Exponential Polynomial}, i.e. an expression of the form
$$f_i(\mathbf{x})=\sum_{j=1}^e a_j \lambda_1^{l_{1,j}(\mathbf{x})}\cdots \lambda_k^{l_{k,j}(\mathbf{x})},$$
for certain constants $a_1,\dots,a_e,\lambda_1,\dots, \lambda_k\in \overline{K}^{\times}$ and linear forms $l_{j,s}(\mathbf{x})$ in $r$ variables whose coefficients are {\bf rational integers}. Here we refer to the elements $\lambda_1,\dots,\lambda_k$ as the {\bf bases} of $\mathbf{f}\colon =(f_1,\dots,f_s)$, to the linear forms $l_{i,j}$ as the {\bf exponents} of $\mathbf{f}$, and to the constants $a_j$ as the {\bf coefficients} of $\mathbf{f}$.}
\begin{rema}
In the definition as above, we do not require that all coefficiens and bases are in $K$. Also, it is easy to see that any finite union of (PEP) sets is still a (PEP) set.
\end{rema}

\begin{exam} The classical Pell equations naturally produce (PEP) sets.  For example, the set of integer solutions of $x^2-2y^2=1$, which corresponds to the integer points of the special orthogonal group for the quadratic form $h=x^2-2y^2$, is given by
$$
\left\{   \left( (-1)^m \left(\frac{(3-2\sqrt{2})^n+(3+2\sqrt{2})^n}{2}    \right), \left( \frac{(3-2\sqrt{2})^n-(3+2\sqrt{2})^n}{2\sqrt{2}}    \right)  \right) ; m,n\in \mathbb{Z} \right\}.
$$
\end{exam}

\begin{exam}
Linear groups $\Gamma$ admitting (BG) by semi-simple elements, which are main study objects of \cite{CRRZ}, become typical examples of (PEP) sets. In fact, if $\Sigma=\Gamma\subset \mathrm{GL}_n(K)$ with $\Gamma=\langle \gamma_1\rangle\cdots \langle \gamma_r \rangle$ with the $\gamma_i$'s semi-simple, then there exist $g_i\in \mathrm{GL}_n(\overline{K})$ and $\lambda_{i,j}$ for $i=1,\dots, r,j=1,\dots,n$ with
$$g_i^{-1}\gamma_i g_i=\mathrm{diag}(\lambda_{i,1},\dots, \lambda_{i,n}),\text{ for all }i=1,\dots,r.$$
This implies that every $\gamma \in \Gamma$ has shape
$$\gamma=\prod_{i=1}^r g_i\left[\mathrm{diag}(\lambda_{i,1}^{a_i},\dots, \lambda_{i,n}^{a_i})\right]g_i^{-1}\text{ for some }a_1,\dots,a_r\in \mathbb{Z}.$$
Comparing entries of the two sides of the above relation, we realize $\Sigma$ as a (PEP) set $\subset \mathbb{A}_K^{n^2}$ in $r$ variables with bases equal to those eigenvlues $\lambda_{i,j}$'s.
\end{exam}

In the current article, we will provide some sparseness results for (PEP) subsets of affine varieties $V\subset \mathbb{A}_K^n$ over a \underline{number field} $K$. The language we are using to describe sparseness is the {\bf height function} on the affine space $K^n$, defined by 
$$H_{\text{aff}}(x_1,\dots,x_n)\colon =H(1:x_1:\dots:x_n)\colon =\Big( \prod_{v \in V_K}\max\{ 1,\| x_1 \|_v,\dots, \| x_n \|_v  \}\Big)^{1\slash [K\colon \mathbb{Q}]}$$
where $V_K$ is the set of all places of $K$, and $\|\cdot \|_v$ are normalized $v$-adic valuations such that the product formula holds. We will also use the corresponding logarithmic height $h_{\text{aff}}\colon = \log H_{\text{aff}}$. See \cite[\S B]{HindrySilverman} or \cite{BombieriGubler} for details about height functions. The first main result of this paper, which is about the distribution of (PEP) sets, can be stated as follows.
\begin{theo}[First Main Theorem: quantitative result] \label{firstmainthm}Let $\mathbb{A}_K^n$ be an affine space over a number field $K$, then for any (PEP) set $\Sigma \subset \mathbb{A}_K^n$ in $r$ variables, we have
$$\Big| \{P\in \Sigma;H_{\text{aff}}(P)\leq H\}\Big|=O((\log H)^r)\text{ when }H\to \infty.$$
In other words, any (PEP) set has at most {\bf logarithmic-to-the-}$r$ growth in terms of the height. 
\end{theo}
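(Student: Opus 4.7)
The plan is to count the points of $\Sigma$ of bounded height by exhibiting, for each such point, a single well-chosen preimage parameter in $\mathbb{Z}^r$. Since a box $\|\mathbf{n}\|_\infty \leq R$ in $\mathbb{Z}^r$ contains only $O(R^r)$ lattice points, it suffices to prove the following: every $P \in \Sigma$ with $H_{\text{aff}}(P) \leq H$ admits at least one preimage $\mathbf{n} \in \mathbf{f}^{-1}(P)$ with $\|\mathbf{n}\|_\infty = O(\log H)$. The proof therefore reduces to a two-sided control of the map $\mathbf{f}\colon \mathbb{Z}^r \to \overline{K}^s$, relating $\|\mathbf{n}\|_\infty$ to $h_{\text{aff}}(\mathbf{f}(\mathbf{n}))$ up to admissible preimage choices.

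The easy (upper) direction is routine: applying the basic inequalities $h(x+y) \leq h(x)+h(y)+\log 2$ and $h(xy) \leq h(x)+h(y)$ term-by-term to each coordinate $f_i(\mathbf{n}) = \sum_{j=1}^{e} a_j \lambda_1^{l_{1,j}(\mathbf{n})}\cdots \lambda_k^{l_{k,j}(\mathbf{n})}$ yields $h_{\text{aff}}(\mathbf{f}(\mathbf{n})) \leq C_1 \|\mathbf{n}\|_\infty + C_2$, with constants depending only on the data $(a_j, \lambda_i, l_{k,j})$. The hard direction, producing a preimage of norm $O(h_{\text{aff}}(\mathbf{f}(\mathbf{n})))$, is the substance of the argument and corresponds to the ``key inequality about minimal $m$-tuples'' mentioned in the abstract.

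To produce such a small preimage, extend $K$ to a number field $K'$ containing every $\lambda_j$ and fix a finite set of places $S$ of $K'$ containing the archimedean ones and all primes at which some $\lambda_j$ is a non-unit. Each monomial $\lambda_1^{l_{1,j}(\mathbf{n})}\cdots \lambda_k^{l_{k,j}(\mathbf{n})}$ is then an $S$-unit, and the scalar equation $f_i(\mathbf{n}) = P_i$ becomes an $S$-unit relation. Evertse's strengthening of the $S$-unit equation theorem then yields a dichotomy: either (i) the projective height of the tuple of monomial values is controlled linearly by $h(P_i)$---which, since the monomials are exponentials of linear forms in $\mathbf{n}$, translates into $\|\mathbf{n}\|_\infty = O(h_{\text{aff}}(P))$---or (ii) some proper subsum among the monomials vanishes, forcing $\mathbf{n}$ into a proper sublattice $L \subset \mathbb{Z}^r$. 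In case (ii), restricting $\mathbf{f}$ to $L$ produces a new (PEP) on $L \cong \mathbb{Z}^{r'}$ with $r' < r$ (or, more generally, with strictly simpler structure), and an induction on this complexity closes the argument. Combining all cases with the lattice-point count yields the desired bound $O((\log H)^r)$.

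The hardest part will be the bookkeeping across the hierarchy of exceptional sublattices: one must verify that each degenerating subsum genuinely imposes a nontrivial linear condition on the exponents---so that the induction descends to a strictly simpler (PEP)---and that the implied constants remain uniform over the finitely many sublattices produced in the recursion. A secondary subtlety is the treatment of coordinates for which $P_i = 0$, where Evertse must be applied in its homogeneous form to $\sum_j a_{i,j}\mu_{i,j}(\mathbf{n}) = 0$, and the conclusion still has to be extracted modulo the kernel of the exponential map $\mathbf{n} \mapsto (\lambda_1^{l_{1,j}(\mathbf{n})}, \ldots)$. This likely requires an initial reduction step precomposing $\mathbf{f}$ with a quotient map that kills redundant directions, so that one may assume from the outset that the exponential map has finite fibres.
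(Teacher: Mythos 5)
Your strategy is the same as the paper's: the theorem is reduced to showing that every point of $\Sigma$ of height at most $H$ has some preimage $\mathbf{n}$ with $\|\mathbf{n}\|_\infty=O(\log H)$ (in the paper this is packaged as a lower bound $h_{\mathrm{aff}}(\mathbf{f}(\mathbf{n}))\geq C\|\mathbf{n}\|_\infty$ for \emph{minimal} preimages, valid outside $\mathbf{f}^{-1}(A)$ with $A$ finite), and that bound is proved exactly as you propose: a preliminary reduction so that the exponential map is injective, then Evertse's inequality in the non-degenerate case, and a descent to fewer variables in the degenerate case. Two remarks. First, your ``dichotomy'' from Evertse is really a trichotomy: besides (i) the height inequality and (ii) a vanishing subsum, Evertse's theorem allows finitely many non-degenerate exceptional solutions; these are harmless for the count (they contribute $O(1)$ points), but they are precisely the source of the non-effective exceptional set, so they should appear explicitly in the argument. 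Second, and more substantively, your step (ii) asserts that a vanishing subsum $\sum_{i\in I}a_iu_i(\mathbf{n})=0$ forces $\mathbf{n}$ into a proper sublattice. This is not automatic and is not mere bookkeeping: the solution set of such a relation is a priori just the preimage under $\mathbf{n}\mapsto(u_i(\mathbf{n}))_{i\in I}$ of a hyperplane section of a finitely generated subgroup of a torus, and one needs Laurent's theorem (the Mordell--Lang statement for tori, \cite[Theorem 10.10.1]{EG15}) to conclude that this set is contained in \emph{finitely many cosets} of proper subgroups of $\mathbb{Z}^r$ --- note: cosets, plural, and translates rather than subgroups, which is why the induction must be run on restrictions of $\mathbf{f}$ to affine sublattices. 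With that tool named and the injectivity reduction you already describe (which guarantees the subgroups have rank $<r$), your argument closes correctly.
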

\begin{rema} In order to interprete Theorem \ref{firstmainthm} as a \underline{sparseness} result, it should be emphasized that there is a highly involved but also well-developed topic about ``counting lattice points in Lie groups''. In particular, \cite[Corollary 1.1]{Maucourant07} (see also \cite[Theorem 2.7 and Theorem 7.4]{GW07}) informs us that points in any lattice of a non-compact semi-simple Lie group $\mathcal{G}$ with finite center have growth rate $cH^d(\log H)^e, H\to \infty$ for certain $c,d>0,e\geq 0$ in terms of an Euclidean norm on $\mathbb{R}^{n^2}\supset \mathrm{GL}_n(\mathbb{R}) \supset \mathcal{G}$. As a consequence, we see that for a semi-simple algebraic group $G\subset \mathrm{GL}_n$ over $\mathbb{Q}$ of non-compact type,  Theorem \ref{firstmainthm} provides sparseness, in terms of the height, for all (PEP) subsets of $\Gamma \colon =G(\mathbb{Z})\colon =\mathrm{GL}_n(\mathbb{Z})\cap G(\mathbb{R})$. As a more explicit example, according to \cite[Example 1.6]{DukeRudnickSarnak}, the set $\left\{\mathbf{s}\in \mathrm{SL}_n(\mathbb{Z});H_{\mathrm{aff}}(\mathbf{s})\leq H\right\}$ is of order $cH^{n^2-n}$ for some $c>0$, therefore any (PEP) set in $\Gamma=\mathrm{SL}_n(\mathbb{Z})$, which has only logarithmic growth, is sparse in terms of the height. Verification of sparseness for (PEP) subsets of many other $S$-arithmetic groups, following strategies developed in \cite{GW07} and \cite{GN12}, will be available in \cite{CDRRZ}.

\end{rema}

If we apply Theorem \ref{firstmainthm} to the particular situation of (BG) by semi-simple elements, we acquire the following consequence. 

\begin{coro}\label{corsparsebgsemisimple} Let $\Gamma \subset \mathrm{GL}_n(K)$ be a linear group over a number field $K$, then for any semi-simple elements $\gamma_1,\dots, \gamma_r\in \Gamma$, we have
$$\Big| \{P\in \langle \gamma_1\rangle\cdots \langle \gamma_r\rangle ;H_{\text{aff}}(P)\leq H\}\Big|=O((\log H)^r)\text{ when }H\to \infty.$$
\end{coro}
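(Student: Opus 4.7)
The plan is to deduce this corollary as an immediate consequence of Theorem~\ref{firstmainthm}, by realizing the product set $\Sigma := \langle \gamma_1 \rangle \cdots \langle \gamma_r \rangle$ as a (PEP) subset of $\mathbb{A}_K^{n^2}$ in exactly $r$ variables, via the standard matrix embedding $\mathrm{GL}_n(K) \hookrightarrow \mathbb{A}_K^{n^2}$. Once the (PEP) structure is in place, Theorem~\ref{firstmainthm} directly yields $|\{P \in \Sigma : H_{\text{aff}}(P) \le H\}| = O((\log H)^r)$, so the entire task reduces to exhibiting such a parametrization.

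To produce the parametrization I would follow the construction sketched in the second example above. Using that each $\gamma_i$ is semi-simple, diagonalize it over $\overline{K}$ as $g_i^{-1}\gamma_i g_i = \mathrm{diag}(\lambda_{i,1},\dots,\lambda_{i,n})$ with $g_i \in \mathrm{GL}_n(\overline{K})$ and $\lambda_{i,j} \in \overline{K}^\times$. Then for any $(a_1,\dots,a_r) \in \mathbb{Z}^r$ we have
$$\gamma_1^{a_1}\cdots\gamma_r^{a_r} \;=\; \prod_{i=1}^r g_i \, \mathrm{diag}(\lambda_{i,1}^{a_i},\dots,\lambda_{i,n}^{a_i})\, g_i^{-1},$$
so after carrying out the multiplication, each matrix entry expands as a finite sum of terms of the form $c\, \lambda_{1,j_1}^{a_1}\cdots \lambda_{r,j_r}^{a_r}$, where the constants $c \in \overline{K}$ depend only on entries of the $g_i$ and $g_i^{-1}$. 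This is exactly a purely exponential polynomial in $(a_1,\dots,a_r)$, with bases among the eigenvalues $\{\lambda_{i,j}\}$ and with linear exponents of the elementary form $l(\mathbf{x}) = x_i$; by the remark following the definition of (PEP), there is no obstruction to coefficients and bases lying in $\overline{K}$ rather than $K$.

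Since $\Sigma \subset K^{n^2}$ is thereby expressed as a (PEP) set in $r$ variables, Theorem~\ref{firstmainthm} completes the proof. No serious obstacle is expected here: the only mild subtlety is that the parametrization map $\mathbb{Z}^r \to \Sigma$ need not be injective, but this only helps us, since Theorem~\ref{firstmainthm} bounds the cardinality of $\Sigma$ intrinsically rather than the number of parametrizing tuples. All of the substantive analytic and Diophantine content — in particular the appeal to Evertse's refinement of the $S$-unit equation theorem — is absorbed into the proof of Theorem~\ref{firstmainthm} itself.
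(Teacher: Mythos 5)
Your proposal is correct and is exactly the paper's route: the paper derives this corollary by citing its Example on (BG) by semi-simple elements, which realizes $\langle\gamma_1\rangle\cdots\langle\gamma_r\rangle$ as a (PEP) subset of $\mathbb{A}_K^{n^2}$ in $r$ variables via diagonalization over $\overline{K}$, and then applies Theorem~\ref{firstmainthm}. Your observations about the coefficients lying in $\overline{K}$ and about non-injectivity of the parametrization being harmless are both consistent with the paper's conventions, so nothing further is needed.
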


The proof of Theorem \ref{firstmainthm} relies crucially on a key statement about the so-called ``minimal $m$-tuples'' with respect to a (PEP) set which seems to be of independent interest. 

\begin{defi} Given a vector ${\mathbf{f}}=(f_1,\dots, f_s)$ of exponential polynomials in $r$ variables, i.e. each $f_j$ is an exponential polynomial in $r$ variables, an element ${\mathbf{n}}=(n_1,\dots,n_r)\in \mathbb{Z}^r$ is called $\mathbf{f}$-{\bf minimal} (or {\bf minimal} with respect to $\mathbf{f}$) if for all ${\mathbf{n'}}=(n_1',\dots, n_r')\in \mathbb{Z}^m$ with ${\mathbf{f}}({\mathbf{n'}})={\mathbf{f}}({\mathbf{n}})$ (i.e. $f_j({\mathbf{n'}})=f_j({\mathbf{n}})$ for all $j$), we have $\| \mathbf{n'} \|_{\infty} \colon =\max\{|n_1'|,\dots, |n_r'|\}\geq \max\{|n_1|,\dots, |n_r|\}=\colon \| \mathbf{n} \|_{\infty} $.
\end{defi} 

\begin{theo}[Primary Height Inequality] \label{minspecial} Let $\mathbf{f}$ be a vector of purely exponential polynomials in $r$ variables, then there exists a constant $C=C(\mathbf{f})>0$ such that for all $\mathbf{f}$-minimal vectors $\mathbf{n}\in \mathbf{Z}^r$, we have

\begin{equation}
h_{\mathrm{aff}}({\mathbf{f}}({\mathbf{n}}))\geq C\cdot \|\mathbf{n}\|_{\infty}
\end{equation}
except on some set of the form $\mathbf{f}^{-1}(A)$ with $A$ finite. 
\end{theo}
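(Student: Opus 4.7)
The plan is to induct on the lexicographically ordered pair $(r,N)$ --- where $r$ is the number of variables and $N=\sum_i e_i$ is the total number of monomial terms in $\mathbf{f}$ --- using Evertse's strengthening of the $S$-unit equation theorem as the main engine. After enlarging $K$ and passing to a finite-index sublattice of $\mathbb{Z}^r$ (analyzed coset by coset), we may assume the bases $\lambda_1,\ldots,\lambda_k$ are multiplicatively independent in $\overline{K}^\times$. Under this hypothesis, $h(\mu_{i,j}(\mathbf{n}))$ is comparable to $\|L_{i,j}\mathbf{n}\|$ for a certain $\mathbb{Z}$-linear map $L_{i,j}\colon \mathbb{Z}^r\to\mathbb{Z}^k$ encoding the exponents. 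If these maps fail to separate points of $\mathbb{Z}^r$, then $\mathbf{f}$ factors as $\mathbf{f}=\tilde{\mathbf{f}}\circ\pi$ through a linear projection $\pi\colon\mathbb{Z}^r\to\mathbb{Z}^{r'}$ with $r'<r$; an $\mathbf{f}$-minimal $\mathbf{n}$ is a minimum-norm representative of $\pi^{-1}(\pi(\mathbf{n}))$, so $\|\mathbf{n}\|_\infty\asymp\|\pi(\mathbf{n})\|_\infty$, and the inductive hypothesis applied to $\tilde{\mathbf{f}}$ in $r'$ variables yields the conclusion. We may therefore assume $M(\mathbf{n})\colon=\max_{i,j}h(\mu_{i,j}(\mathbf{n}))\geq c_0\|\mathbf{n}\|_\infty$ for some $c_0>0$, and it remains to lower bound $h(f_i(\mathbf{n}))$ by $M(\mathbf{n})-O(1)$ for some $i$.

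For each tuple of partitions $\pi=(\pi_1,\ldots,\pi_s)$ --- with $\pi_i$ a partition of the augmented index set $\{1,\ldots,e_i\}\cup\{\star\}$, where $\star$ represents the ``test term'' $-f_i(\mathbf{n})$ --- let $E_\pi\subseteq\mathbb{Z}^r$ consist of those $\mathbf{n}$ for which $\pi_i$ is the coarsest partition into blocks whose corresponding subsums of the augmented relation $\sum_j a_{i,j}\mu_{i,j}(\mathbf{n})-f_i(\mathbf{n})=0$ vanish. On the generic stratum $E_{\pi_0}$, where no proper subsum of this relation vanishes, a parametric application of Evertse's bound (ultimately resting on Schmidt's Subspace Theorem) confines the projective point $[a_{i,1}\mu_{i,1}(\mathbf{n}):\cdots:a_{i,e_i}\mu_{i,e_i}(\mathbf{n}):-f_i(\mathbf{n})]$ to a finite subset of $\mathbb{P}^{e_i}$, yielding $h(f_i(\mathbf{n}))\geq h(\mu_{i,j}(\mathbf{n}))-O(1)$ for every $j$ and consequently $h_{\mathrm{aff}}(\mathbf{f}(\mathbf{n}))\geq M(\mathbf{n})-O(1)\geq c_0\|\mathbf{n}\|_\infty-O(1)$. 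This settles the inequality on $E_{\pi_0}$ up to finitely many exceptional image values, absorbed into $A$. On a non-generic stratum $E_\pi$, some block $B\not\ni\star$ has $|B|\geq 2$ and satisfies the genuine $S$-unit equation $\sum_{j\in B}a_{i,j}\mu_{i,j}(\mathbf{n})=0$ with no proper vanishing subsum; Evertse's theorem then forces the ratios $\mu_{i,j}(\mathbf{n})/\mu_{i,j_0}(\mathbf{n})$ to take only finitely many values on $E_\pi$, and multiplicative independence of the $\lambda_k$'s translates this into a confinement of $\mathbf{n}$ to a finite union of cosets of a proper sublattice $L_\pi\subsetneq\mathbb{Z}^r$.

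Restricting $\mathbf{f}$ to each such coset $\mathbf{n}_0+L_\pi$ yields a purely exponential polynomial $\mathbf{f}_{\mathbf{n}_0}$ in $\rk(L_\pi)\leq r$ variables with strictly fewer monomials than $\mathbf{f}$ (each vanishing block cancels identically or collapses into a single effective term), so the inductive hypothesis on $(r,N)$ applies to $\mathbf{f}_{\mathbf{n}_0}$. To promote $\mathbf{f}$-minimality of $\mathbf{n}$ to usable minimality for the inductive call, we replace $\mathbf{m}=\mathbf{n}-\mathbf{n}_0\in L_\pi$ by the $\mathbf{f}_{\mathbf{n}_0}$-minimizer $\mathbf{m}^*$ of its fiber; minimality of $\mathbf{n}$ with respect to $\mathbf{f}$ then forces $\|\mathbf{m}^*\|_\infty\asymp\|\mathbf{n}\|_\infty$, so the inductive conclusion transfers. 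The main technical obstacle will be the generic-case inequality $h(f_i(\mathbf{n}))\geq M(\mathbf{n})-O(1)$, which is the true parametric content of Evertse's strengthening and the most delicate input; a secondary difficulty is the careful bookkeeping of the exceptional set $A$ throughout the recursion, since each inductive step contributes finitely many new bad image values whose aggregate must remain finite at the end.
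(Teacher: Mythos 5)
Your proposal is correct and follows essentially the same route as the paper's proof: reduction to multiplicatively independent bases, Evertse's height inequality on the non-degenerate stratum combined with the comparison $\sum_j h(u_j(\mathbf{n}))\asymp\|\mathbf{n}\|_\infty$, and confinement of the degenerate $\mathbf{n}$ to finitely many cosets of sublattices followed by induction with a minimality-transfer step. The only (harmless) divergences are organizational: you induct lexicographically on $(r,N)$ and make the number of monomials drop on each coset, where the paper arranges (via the spanning assumption on the exponent forms) for the rank to drop and inducts on $r$ alone, and you apply Evertse's finiteness to the vanishing subsums directly where the paper cites Laurent's theorem, which it notes is itself a consequence of Evertse's inequality.
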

It should be emphasized that the constant $C$ above will be explicitly computable, while the cardinality of the set $A$ in Theorem \ref{minspecial} is non-effective in general, see Remark \ref{effective}.

The first main Theorem, i.e. Theorem \ref{firstmainthm}, being quantitative itself, leads us to the following qualitative theorem which fully describes all linear groups admitting (BG) by semi-simple elements (or (PEP)). It is worth pointing out that, thanks to a specialization argument, the following result works for linear groups over {\bf arbitrary} fields of characteristic zero.
\begin{theo}[Second Main Theorem: qualitative result]\label{secondmainthm} 
Let $K$ be a field of characteristic zero and let $\Gamma \subset \mathrm{GL}_n(K)$ be a linear group. Then the following three properties are equivalent. 
\begin{enumerate}
\item $\Gamma$ has (PEP).
\item $\Gamma$ consists only of semi-simple elements and has (BG).
\item $\Gamma$ is finitely generated and the connected component $G^{\circ}$ of the Zariski closure $G$ of $\Gamma$ is a torus (in particular, $\Gamma$ is virtually abelian). 
\end{enumerate}
\end{theo}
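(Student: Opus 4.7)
My plan is to prove the cycle of implications. The implication $(2)\Rightarrow(1)$ is the content of Example 1.4. For $(3)\Rightarrow(2)$, when $G^\circ$ is a torus and $\Gamma$ is finitely generated, the finite-index subgroup $\Gamma\cap G^\circ$ lies in an abelian group, so $\Gamma$ is virtually abelian; every element of $\Gamma$ is semi-simple because $\gamma^{[G:G^\circ]}\in G^\circ$ is semi-simple and in characteristic zero a root of a semi-simple element is semi-simple; bounded generation then follows from a coset decomposition modulo $\Gamma\cap G^\circ$ together with the fact that a finitely generated abelian group is a direct product of cyclic groups.

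The main work is in $(1)\Rightarrow(3)$. I would first reduce to the case where $K$ is a number field by a standard specialization argument: the matrix entries of a chosen countable list of elements of $\Gamma$, the bases $\lambda_i$ and the coefficients $a_j$ all lie in a finitely generated subring $R\subset K$, and one produces a homomorphism $R\to\overline{\mathbb{Q}}$ that is injective on the chosen list while preserving the (PEP) structure in the same number of variables. With $K$ a number field, Theorem \ref{firstmainthm} gives $\#\{\gamma\in\Gamma:H_{\mathrm{aff}}(\gamma)\le H\}=O((\log H)^r)$. A first consequence is that $\Gamma$ contains no non-trivial unipotent element: the entries of $u^n$ for such a $u$ are polynomials in $n$ of some positive degree $k$, so $H_{\mathrm{aff}}(u^n)\sim C|n|^k$ and $\#\{u^n:H_{\mathrm{aff}}(u^n)\le H\}\sim H^{1/k}$ exceeds every polynomial in $\log H$.

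Next I would show that every finitely generated subgroup $\Gamma_0\le\Gamma$ is virtually abelian. By the Tits alternative, $\Gamma_0$ is either virtually solvable or contains a non-abelian free subgroup; the latter is ruled out because words of length at most $N$ in a chosen generating set have logarithmic height at most $cN$, so Theorem \ref{firstmainthm} admits only $O(N^r)$ such elements. Passing to a finite-index solvable subgroup of $\Gamma_0$ contained in the identity component of its Zariski closure, which is connected solvable, Lie--Kolchin puts it in upper-triangular form, so its commutator subgroup consists of strictly upper-triangular (hence unipotent) matrices; since $\Gamma$ contains no non-trivial unipotent element, the commutator is trivial, the subgroup is abelian, and $\Gamma_0$ is virtually abelian. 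Applying this to a finitely generated Zariski-dense $\Gamma_0\le\Gamma$ shows that $G^\circ$ is a connected commutative algebraic group, which we may write as $T\times U$ with $T$ a torus and $U$ unipotent. Then $\Gamma\cap U$ consists of unipotent elements of $\Gamma$ and is therefore trivial, so $\Gamma\cap G^\circ$ injects into $T$ via projection and realises itself as the graph of a group homomorphism $\phi$ from its image in $T$ to $U$. Along the (PEP) parametrization the $T$-coordinate is a product of characters $\prod_s M_s^{n_s}$, so $\phi$ applied to it equals $\sum_s n_s\phi(M_s)$, a $\mathbb{Z}$-linear function of $\mathbf{n}$; the $U$-coordinate of $\mathbf{f}(\mathbf{n})$ must simultaneously be a purely exponential polynomial, and linear independence of distinct characters forces the linear function to vanish identically, so $\phi\equiv 0$. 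Together with Zariski density this forces $U=\{1\}$, i.e.\ $G^\circ=T$ is a torus.

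To show that $\Gamma$ is finitely generated, it suffices to handle $\Gamma\cap G^\circ$, a subgroup of the torus $G^\circ$. Zariski density forces its rank to be at least $\dim G^\circ$, while standard lattice-point counting in the logarithmic embedding of an algebraic torus over a number field gives $\gg(\log H)^\rho$ elements of height at most $H$ for a subgroup of rank $\rho$; the bound $O((\log H)^r)$ of Theorem \ref{firstmainthm} therefore forces $\rho\le r$, and finiteness of torsion in the torus over a number field then yields finite generation of $\Gamma\cap G^\circ$. The main obstacles I anticipate are the specialization step, where one must preserve the (PEP) and Zariski-closure structure simultaneously, and the step forcing $U=\{1\}$, where the comparison between the (PEP) form of the $U$-coordinate and a $\mathbb{Z}$-linear function must be made rigorous using the linear independence of characters---closely related to Evertse's $S$-unit inequality that appears in the proof of Theorem \ref{minspecial}.
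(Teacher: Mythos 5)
Your overall architecture ($(2)\Rightarrow(1)$ via Example 1.4, $(3)\Rightarrow(2)$ by elementary group theory and the coset decomposition, and all the real content in $(1)\Rightarrow(3)$) is sound, and parts of it genuinely diverge from the paper's route: the paper states that its proof runs parallel to that of \cite{CRRZ} and again uses the theory of generic elements, whereas you derive virtual solvability from the Tits alternative plus the observation that a free subgroup would give $\gg H^{c}$ elements of height at most $H$, and you replace the paper's Lemma 2.6 (finite generation of a (PEP) image that is a subgroup of $K^*$) by a rank bound obtained from lattice-point counting in the torus. Those substitutions look legitimate once Theorem \ref{firstmainthm} is available.

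The genuine gap is in your step forcing $U=\{1\}$. You only rule out nontrivial \emph{unipotent} elements of $\Gamma$, which is strictly weaker than ruling out non-semi-simple elements (a matrix with a nontrivial Jordan block at an eigenvalue that is not a root of unity is neither unipotent nor semi-simple, and no power of it is unipotent). Your attempt to finish via the graph homomorphism $\phi$ does not work as written: the (PEP) parametrization $\mathbf{f}$ of $\Gamma$ is not a group homomorphism, so the $T$-coordinate of $\mathbf{f}(\mathbf{n})$ is not a character $\prod_s M_s^{n_s}$ of $\mathbb{Z}^r$ --- it is merely a tuple of purely exponential polynomials --- and the identity you want would in any case only be available on the subset $\mathbf{f}^{-1}(\Gamma\cap G^{\circ})$, which is neither all of $\mathbb{Z}^r$ nor a subgroup. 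The paper's Corollary \ref{Cor} supplies exactly the missing ingredient: for non-semi-simple $g=g_sg_u$, the condition $g^n\in\Sigma$ forces $g_u^n\in\Sigma\cdot\langle g_s\rangle$, which is again a (PEP) set, so your unipotent counting applies and gives $\big|\{n\leq N: g^n\in\Gamma\}\big|=O(\log^m N)$; since $\Gamma$ is a group this count equals $N$, so $\Gamma$ contains no non-semi-simple element at all, whence $\Gamma\cap G^{\circ}\subseteq T$ and $U=\{1\}$ by Zariski density. Separately, the specialization step you flag is a real issue (one must choose a homomorphism from the finitely generated ring of entries into $\overline{\mathbb{Q}}$ that respects the group structure, keeps the (PEP) form in $r$ variables, and transfers the failure of (3) back to $K$); the paper defers this entirely to \cite{CDRRZ}, so it should be treated as a step still requiring proof rather than as standard.
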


This result serves as an extension of one main Theorem in \cite[Theorem 1.1]{CRRZ} which claims that if a linear group over a field of characteristic zero has (BG) by semi-simple elements, then it is virtually solvable. More importantly, Theorem \ref{secondmainthm} gives a {\bf complete} answer to the Questions asked in \cite[p. 3]{CRRZ}. 

\section{Brief outline of proofs}
It is straightforward to verify that Theorem \ref{minspecial} implies Theorem \ref{firstmainthm}. For simplicity of
argument, we only sketch the proof of Theorem \ref{minspecial} for $\mathbf{f}=f$ being a single purely exponential polynomial.
 The sketch we give here follows the lines of the proof in the general case, and already includes all the main ideas and ingredients in the counterpart in \cite{CDRRZ}.

\begin{proof}[Sketch of proof of Theorem \ref{minspecial}]
	The proof goes by induction on $r$, the number of variables in $\mathbf{n}$. The base case when $r=0$ is trivial, now let $r\geq 1$.
	We write:
	\[
	f(\mathbf{n})=\sum_{i=1}^e  a_i  u_i(\mathbf{n}),
	\]
	where $\lambda_j, a_i\in K^*$ and $u_i(\mathbf{n})=\lambda_{1}^{l_{1,i}(n_1,\ldots,n_r)} \cdots \lambda_{k}^{l_{k,i}(n_1,\ldots,n_r)}$ are purely exponential monomials. 
	
	Some non-trivial but routine manipulations enable one to reduce to the case where $\lambda_1,\ldots,\lambda_k$ are multiplicatevely independent, i.e. $\lambda_1^{\theta_1}\cdots \lambda_k^{\theta_k}=1(\theta_j\in \mathbb{Z}) \Longleftrightarrow \theta_1=\cdots \theta_k=0$, and 
	where the linear forms 
	$l_{i,j}$ span the dual space of $\mathbb{Q}^{r}$ over $\mathbb{Q}$.

	We need the following crucial height inequality which can be derived from a result of Evertse \cite[Theorem 6.1.1]{EG15}  (which is itself a consequence of the Schlickewei-Schmidt Subspace Theorem, cf. \cite[Theorem 2.2]{CorvajaZannier}).
	\begin{thm}[Evertse] \label{evertseinequality}
		Let $S$ be a finite set of places of a number field $K$ containing all archimedean ones. Then there exists an effective $C>0$ such that the inequality
		\[
		h_{\mathrm{aff}}(s_1+\cdots+s_e) < C \cdot (h_{\mathrm{aff}}(s_1)+\cdots+h_{\mathrm{aff}}(s_e))\text{   with   }s_i\in \mathcal{O}_S^{\times}
		\]
		has only finitely many solutions such that the sum $s_1+\cdots+s_e$ is non-degenerate.
	\end{thm}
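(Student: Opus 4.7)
The plan is to reduce this height inequality to the Schlickewei--Schmidt Subspace Theorem, which is the classical route to all statements of Evertse type for $S$-unit sums. I would first encode the data geometrically: to each tuple $(s_1,\ldots,s_e)$ attach the projective point $\mathbf{s} = [s_1 : \cdots : s_e] \in \mathbb{P}^{e-1}(K)$, whose projective height $H(\mathbf{s})$ is controlled by $\sum_i h_{\mathrm{aff}}(s_i)$ since the $s_i$ are $S$-units and hence have all their height concentrated on the places of $S$.

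At each $v\in S$ I would take the system of $e$ linearly independent linear forms consisting of the $e-1$ coordinate forms $X_i$ (those indices $i$ for which $\|s_i\|_v$ is \emph{not} maximal at $v$) together with the ``sum form'' $L(X)=X_1+\cdots+X_e$; outside $S$ I just take the standard coordinate forms. Evaluated at $\mathbf{s}$, the sum form produces $\|s_1+\cdots+s_e\|_v$, which the hypothesis $h_{\mathrm{aff}}(s_1+\cdots+s_e) < C\cdot(h_{\mathrm{aff}}(s_1)+\cdots+h_{\mathrm{aff}}(s_e))$ forces to be unusually small after one aggregates over $v\in S$. Combining the product formula with the $S$-unit condition to simplify the coordinate-form contributions, one verifies that for a sufficiently small but effectively computable $C=C(e,S)>0$ the product
\[
\prod_{v\in S}\prod_{j=1}^{e}\frac{\|L^{(j)}_v(\mathbf{s})\|_v}{\|\mathbf{s}\|_v}
\]
drops below the threshold $H(\mathbf{s})^{-\epsilon}$ required by the Subspace Theorem for some $\epsilon>0$, thereby forcing $\mathbf{s}$ into a finite union of proper linear subspaces of $\mathbb{P}^{e-1}(K)$.

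Each such exceptional subspace furnishes a nontrivial $K$-linear relation among the $s_i$. Here I would induct on $e$: a nontrivial dependence lets one eliminate a coordinate and rewrite $s_1+\cdots+s_e$ as a shorter sum of $K$-multiples of the remaining summands, and the non-degeneracy hypothesis --- no proper subsum of $s_1+\cdots+s_e$ vanishes --- is precisely what guarantees that each reduction step lands in a configuration to which the induction hypothesis still applies, so only finitely many exceptional tuples survive. The main obstacle is the bookkeeping needed to calibrate the linear forms and the constant $C$ so that the Subspace Theorem's hypothesis is triggered with an explicit $\epsilon$; a secondary, more structural point is that the Subspace Theorem itself is ineffective, so while $C$ can be made effective the exceptional set cannot be bounded explicitly --- which is precisely the source of the non-effectivity of the set $A$ appearing in Theorem~\ref{minspecial}.
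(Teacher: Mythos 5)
The paper itself does not prove this statement: it is quoted as known, with a pointer to Evertse--Gy\H{o}ry \cite[Theorem 6.1.1]{EG15} and a note that this in turn rests on the Schlickewei--Schmidt Subspace Theorem. Your proposal reconstructs a proof from the Subspace Theorem directly, which is indeed the standard path (and is, up to formulation, the path Evertse and Gy\H{o}ry take), so at the level of strategy there is nothing objectionable.

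However, there is a genuine gap in the central estimate, namely in the sentence ``whose projective height $H(\mathbf{s})$ is controlled by $\sum_i h_{\mathrm{aff}}(s_i)$.'' That control only goes one way: one has $h(\mathbf{s}) \leq \sum_i h_{\mathrm{aff}}(s_i)$, but the reverse inequality fails badly. For example, with $s_1=\cdots=s_e=s$ for $s$ an $S$-unit of large height, the projective point $[s:\cdots:s]=[1:\cdots:1]$ has $h(\mathbf{s})=0$ while $\sum_i h_{\mathrm{aff}}(s_i)=e\cdot h(s)$ is unbounded. This matters because the Subspace Theorem is inherently projective: after simplifying the coordinate-form contributions using $\prod_{v\in S}\prod_i \|s_i\|_v=1$, triggering the theorem requires an inequality of the shape $h_{\mathrm{aff}}(s_1+\cdots+s_e) < (1-\varepsilon)\,h(\mathbf{s})$, and the stated hypothesis $h_{\mathrm{aff}}(s_1+\cdots+s_e) < C\sum_i h_{\mathrm{aff}}(s_i)$ does not imply this when $\sum_i h_{\mathrm{aff}}(s_i)\gg h(\mathbf{s})$. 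Thus, as written, the argument does not land below the required threshold. The example above shows the statement is genuinely affine in character ($C<1/e$ is already forced by it), so one must either normalize the tuple (say divide through by some $s_{i_0}$) and carefully retrack how both sides of the hypothesis and the non-degeneracy condition transform, or run a separate argument over each scaling orbit $\{(cs_1,\ldots,cs_e):c\in\mathcal{O}_S^\times\}$ before invoking the Subspace Theorem on the quotient. A secondary but fixable point: in the inductive step the elimination of a coordinate introduces coefficients from the exceptional hyperplane, so the new summands live in a possibly larger $S$-unit group; one should say explicitly that $S$ is enlarged at each stage and that the non-vanishing-subsum hypothesis descends to the shorter sum (this last is where non-degeneracy is actually used).

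\end{document}
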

       Here {\bf non-degenerate} refers to the fact that $\sum_{i \in I} s_i \neq 0$ for any nonempty proper subset $I\subseteq \{1,\ldots,e\}$.
	
	Using Theorem \ref{evertseinequality} by taking the set $S$ of places such that all bases $\lambda_i$ and coefficients $a_j$ of $f$ are $S$-units, we obtain that for certain $C'>0$, the inequality:
	\begin{equation}\label{Eq:inequality}
		h_{\mathrm{aff}}(f(\mathbf{n})) \geq C'\cdot (h_{\mathrm{aff}}(u_1(\mathbf{n}))+ \ldots + h_{\mathrm{aff}}(u_e(\mathbf{n})))
	\end{equation}
	holds for all but finitely many $\mathbf{n} \in \mathbb{Z}^r$ such that the sum defining $f(\mathbf{n})$ is non-degenerate. 
	
	Recall the following standard fact \cite[p.118, Eq. (3.12)]{Zannier09}:
	\begin{prop}
		Let $m\in \mathbb{N}$ and $\phi =(\phi_1,\ldots,\phi_e):\mathbb{Z}^r \to (K^*)^e$ be an injective group homomorphism. Then there are constants $C_2>C_1>0$ such that for every $\mathbf{n} \in \mathbb{Z}^r$ the following inequalities hold:
		\[
		C_1 \norm{\mathbf{n}}_{\infty} \leq h_{\mathrm{aff}}(\phi_1(\mathbf{n}))+\ldots+h_{\mathrm{aff}}(\phi_e(\mathbf{n})) \leq C_2 \norm{\mathbf{n}}_{\infty}.
		\]
	\end{prop}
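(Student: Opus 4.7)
The plan is to reduce both inequalities to the equivalence of norms on the finite-dimensional real vector space $\mathbb{R}^{r}$ via a Dirichlet-style logarithmic embedding. Setting $\alpha_{i,j} := \phi_i(e_j)\in K^{*}$, one has $\phi_i(\mathbf{n}) = \prod_{j=1}^{r}\alpha_{i,j}^{n_j}$, so each $\log\|\phi_i(\mathbf{n})\|_v$ is a linear form in $\mathbf{n}$ with real coefficients. Let $S$ denote the (finite) set of places of $K$ outside which every $\alpha_{i,j}$ is a unit, set $N = |S|$, and consider the log map
\[
\Phi:\mathbb{Z}^{r}\longrightarrow\mathbb{R}^{eN},\qquad \Phi(\mathbf{n}):=\bigl(\log\|\phi_i(\mathbf{n})\|_v\bigr)_{1\le i\le e,\,v\in S}.
\]
The product formula gives $2[K:\mathbb{Q}]\cdot h_{\mathrm{aff}}(x)=\sum_v|\log\|x\|_v|$ for $x\in K^{*}$, whence
\[
\sum_{i=1}^{e}h_{\mathrm{aff}}(\phi_i(\mathbf{n})) \;=\; \frac{1}{2[K:\mathbb{Q}]}\,\|\Phi(\mathbf{n})\|_1,
\]
so the proposition reduces to a two-sided comparison between $\|\Phi(\mathbf{n})\|_{1}$ and $\|\mathbf{n}\|_{\infty}$.

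For the upper bound, subadditivity and homogeneity of the height give $h_{\mathrm{aff}}(\phi_i(\mathbf{n}))\le\sum_j|n_j|\,h_{\mathrm{aff}}(\alpha_{i,j})\le \|\mathbf{n}\|_{\infty}\sum_j h_{\mathrm{aff}}(\alpha_{i,j})$, and summing over $i$ yields the claimed inequality with $C_2=\sum_{i,j}h_{\mathrm{aff}}(\alpha_{i,j})$.

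The lower bound is the substance of the argument. Since $\Phi$ is $\mathbb{Z}$-linear with values in a real vector space, it extends uniquely to an $\mathbb{R}$-linear map $\widetilde\Phi:\mathbb{R}^{r}\to\mathbb{R}^{eN}$. If $\widetilde\Phi$ is injective, then $\mathbf{x}\mapsto\|\widetilde\Phi(\mathbf{x})\|_1$ is a genuine norm on $\mathbb{R}^{r}$, hence equivalent to $\|\cdot\|_{\infty}$, and the lower bound follows. To prove injectivity of $\widetilde\Phi$, I would show that $\Phi(\mathbb{Z}^{r})$ is a discrete subgroup of $\mathbb{R}^{eN}$ of $\mathbb{Z}$-rank $r$. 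On the one hand, the kernel of $\Phi$ on $\mathbb{Z}^{r}$ consists of those $\mathbf{n}$ with every $\phi_i(\mathbf{n})$ a root of unity, i.e.\ with $\phi(\mathbf{n})$ torsion in $(K^{*})^{e}$; by injectivity of $\phi$ combined with the torsion-freeness of $\mathbb{Z}^{r}$ (if $\phi(\mathbf{n})^{N}=1$ then $N\mathbf{n}=0$, hence $\mathbf{n}=0$), this kernel is trivial, so $\Phi$ is already $\mathbb{Z}$-linearly injective. On the other hand, discreteness follows from Northcott's finiteness theorem: any sequence $(\mathbf{n}_k)\subset\mathbb{Z}^{r}$ with $\Phi(\mathbf{n}_k)$ bounded produces a sequence $\phi(\mathbf{n}_k)\in(K^{*})^{e}$ of uniformly bounded height, which must therefore lie in a finite set, and injectivity of $\phi$ forces the $\mathbf{n}_k$ to lie in a finite set as well. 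A discrete subgroup of $\mathbb{R}^{eN}$ of $\mathbb{Z}$-rank $r$ is a lattice in its real span, which accordingly has dimension exactly $r$; thus the image of $\widetilde\Phi$ contains this $r$-dimensional subspace, so $\widetilde\Phi$ has rank $r$ and is injective.

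The main obstacle is precisely this passage from $\mathbb{Z}$-linear to $\mathbb{R}$-linear injectivity: in general a $\mathbb{Z}$-linear injection $\mathbb{Z}^{r}\hookrightarrow\mathbb{R}^{N}$ need not extend to an $\mathbb{R}$-linear injection, so without the Northcott-type discreteness input one cannot rule out real linear relations among the log-vectors of the $\alpha_{i,j}$ that are not forced by integral ones, and the lower bound would fail by Diophantine approximation.
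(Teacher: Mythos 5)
Your proof is correct and complete: the reduction of $\sum_i h_{\mathrm{aff}}(\phi_i(\mathbf{n}))$ to the $\ell^1$-norm of the $S$-unit logarithmic embedding, followed by Northcott-type discreteness to upgrade the $\mathbb{Z}$-linear injectivity of $\Phi$ to $\mathbb{R}$-linear injectivity of $\widetilde\Phi$ (the genuinely non-formal step, which you correctly identify), is exactly the standard argument. The paper does not prove this proposition but only cites \cite[p.~118, Eq.~(3.12)]{Zannier09}, where the proof proceeds along the same lines, so your write-up is essentially a self-contained version of the cited fact.
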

	
	Using the proposition above with $\phi=(u_1,\ldots,u_e)$, which is injective because of the assumption that the linear forms $l_{i,j}$ span $(\mathbb{Q}^r)^{\vee}$ and that those $\lambda_j$'s are multiplicatively independent, one deduces that the right hand side of \eqref{Eq:inequality} is $\asymp \norm{n}_{\infty}$. This completes the argument for the non-degenerate case. 
	
	Now consider those $\mathbf{n} \in \mathbb{Z}^r$ such that the sum defining $f(\mathbf{n})$ is degenerate. Then we may take a proper subset $I=\{i_1,\dots,i_t\} \subset \{1,\dots, e\} (t<e)$ with $a_{i_1}u_{i_1}(\mathbf{n})+\cdots+a_{i_t}u_{i_t}(\mathbf{n})=0$.
	
	We are now in a position to use Laurent's theorem \cite[Theorem 10.10.1]{EG15}, which can also be  deduced from Theorem \ref{evertseinequality}.
	\begin{theo}[Laurent]\label{Laurent}
		Let $K$ be a number field, $\Gamma \subseteq (K^*)^t$ be a finitely generated subgroup, and let $X$ be a subvariety of $(\mathbb{G}_m)^t$. Then the Zariski closure of $\Gamma \cap X$ is a finite union of cosets of $(\mathbb{G}_m)^t$.
	\end{theo}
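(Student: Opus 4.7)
The plan is to derive Laurent's theorem from Evertse's Theorem \ref{evertseinequality} by induction on the dimension $t$ of the ambient torus, exploiting that any polynomial equation vanishing on a component of $\overline{\Gamma \cap X}$ becomes an $S$-unit equation when evaluated at points of $\Gamma$.

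First I would perform the standard reductions. Decompose the Zariski closure $Y := \overline{\Gamma \cap X}$ into irreducible components $V_1, \dots, V_k$; after subtracting the pairwise intersections, each $V_i$ is the Zariski closure of $\Gamma \cap V_i$. So it suffices to show: \emph{if $V \subseteq (\mathbb{G}_m)^t$ is an irreducible subvariety such that $\Gamma \cap V$ is Zariski dense in $V$, then $V$ is a translate of an algebraic subtorus.} Induct on $t$, the base case $t=0$ being trivial. For the inductive step, if $V = (\mathbb{G}_m)^t$ we are done; otherwise pick a nonzero Laurent polynomial $F = \sum_{i=1}^{n} c_i x^{a_i}$ in the vanishing ideal $I(V)$ whose support (number of monomials) is as small as possible.

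By this minimality, no proper nonempty subsum $\sum_{i \in I} c_i x^{a_i}$ vanishes identically on $V$, so every ``degenerate stratum'' $V_I := V \cap \{\sum_{i \in I} c_i x^{a_i} = 0\}$ is a \emph{proper} closed subvariety of the irreducible $V$. Now enlarge $S$ so that all $c_i$, and the coordinates of a finite generating set of $\Gamma$, are $S$-units; then for every $\gamma \in \Gamma \cap V$, the equality $\sum_i c_i \gamma^{a_i} = 0$ is an $S$-unit equation in the sense of Theorem \ref{evertseinequality}. That theorem provides only finitely many non-degenerate solutions, and every other $\gamma$ lies in one of the finitely many proper subvarieties $V_I$. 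Since $\Gamma \cap V$ is Zariski dense in the irreducible $V$, it cannot be covered by a finite union of proper subvarieties together with a finite set unless there are no proper subsums at all, which forces $n \leq 2$; and $n=1$ is excluded because a single monomial cannot vanish on $(\mathbb{G}_m)^t$.

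So $F = c_1 x^{a_1} + c_2 x^{a_2}$ and $V$ is contained in the coset $W := \{x^{a_1-a_2} = -c_2/c_1\}$. Picking $\gamma_0 \in \Gamma \cap V$ and setting $V' := \gamma_0^{-1} V$, one checks that $V'$ is still the Zariski closure of $\Gamma \cap V'$ (since $\gamma_0 \in \Gamma$) and lies inside the algebraic subgroup $H := \{y^{a_1-a_2} = 1\}$. The closed subgroup $H \subset (\mathbb{G}_m)^t$ has finitely many connected components, each a subtorus of dimension $t-1$; irreducibility of $V'$ places it inside one such component $T \cong (\mathbb{G}_m)^{t-1}$. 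The inductive hypothesis applied to $V' \subseteq T$ shows $V'$ is a coset of a subtorus, and translating back by $\gamma_0$ shows the same for $V$.

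The crux---and the main obstacle---is the minimal-support argument: one has to choose $F$ so that the Evertse dichotomy (non-degenerate vs.\ degenerate) produces only \emph{proper} subvarieties on the degenerate side, which is exactly what minimality of the support guarantees. Once that observation is in place, the proof is driven uniformly by Evertse's inequality together with the elementary structure of closed subgroups of $(\mathbb{G}_m)^t$.
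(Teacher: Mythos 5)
The paper does not actually prove Laurent's theorem: it cites it as \cite[Theorem~10.10.1]{EG15} and merely remarks that it ``can also be deduced from Theorem~\ref{evertseinequality}''. So your attempt can only be judged on its own terms. Your overall strategy (reduce to an irreducible $V$ with $\Gamma\cap V$ dense, induct on $t$, pick a relation of minimal support in $I(V)$, split according to Evertse's non-degenerate/degenerate dichotomy) is the standard route to Laurent's theorem from $S$-unit finiteness, but there are two genuine gaps in your execution.

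\textbf{(1) You apply Evertse's inequality to a vanishing sum.} When you evaluate $F$ at $\gamma$ you get $\sum_i c_i\gamma^{a_i}=0$, so the left-hand side of Theorem~\ref{evertseinequality} is $h_{\mathrm{aff}}(0)=0$. Theorem~\ref{evertseinequality} cannot be invoked in this regime: taken literally with a vanishing sum it would be false, since already for $e=2$ the pairs $(u,-u)$ are non-degenerate for every $S$-unit $u$ and satisfy $0 = h_{\mathrm{aff}}(u-u) < C\bigl(h_{\mathrm{aff}}(u)+h_{\mathrm{aff}}(-u)\bigr)$ whenever $u$ is not a root of unity, giving infinitely many solutions. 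The intended (and necessary) hypothesis is $s_1+\cdots+s_e\neq 0$. To use Evertse you must first normalize the homogeneous relation, e.g.\ divide by $c_n\gamma^{a_n}$ to obtain $\sum_{i<n}(c_i/c_n)\gamma^{a_i-a_n}=-1$, which is a bona fide inhomogeneous $S$-unit equation.

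\textbf{(2) The ``contradiction for $n\geq 3$'' step is not valid.} Even after normalizing, Evertse gives only finitely many non-degenerate tuples $\bigl((c_i/c_n)\gamma^{a_i-a_n}\bigr)_{i<n}$. This constrains $\gamma$ only up to the algebraic subgroup $H=\{x\in(\mathbb G_m)^t : x^{a_i-a_n}=1 \text{ for all }i\}$, which is positive-dimensional whenever the differences $a_i-a_n$ fail to span a finite-index sublattice of $\mathbb Z^t$; the non-degenerate branch then gives $\Gamma\cap V$ covered by finitely many cosets of $H$, not a finite set of points. The correct conclusion, by irreducibility of $V$ and properness of the $V_I$, is that $V$ is contained in a single coset of a connected component of $\overline{H}$ --- which is exactly your $n=2$ situation and should be fed into the inductive hypothesis, not used to derive a contradiction. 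In other words, the argument should treat all $n\geq 2$ uniformly: either $\dim H = 0$ and you get a contradiction with Zariski density, or $\dim H > 0$ and you translate into a lower-dimensional torus and induct. (You do need $\dim H < t$; this follows because your minimality forces $n\geq 2$, hence the $a_i$ are not all equal, since a single monomial is nowhere zero on $(\mathbb G_m)^t$.) With these two repairs --- normalize the relation before invoking Evertse, and run the induction for every $n\geq 2$ instead of forcing $n\leq 2$ --- the proof goes through.
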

	
	Employing
	Laurent's theorem on
	the subgroup $\Gamma=\mathrm{im} (\phi=(u_{i_1},\ldots,u_{i_t}):\mathbb{Z}^r \to (K^*)^t)$ and the hyperplane $X: a_{i_1}x_{i_1}+\ldots+a_{i_t}x_{i_t}=0$, and letting
	$I$ go through all proper subsets of $\{1,\dots,e\}$ (finitely many possibilities), we deduce that the set of such $\mathbf{n}$ is contained in a finite union of cosets of $\mathbb{Z}^r$. Moreover, due to the assumption that the linear forms $l_{i,j}$ span $(\mathbb{Q}^r)^{\vee}$, we may assume these cosets are all translates of subgroups of rank $<r$.

       Taking the restriction of $f$ to each of the above proper cosets, and composing it with a suitable affine transformation, we produce finitely many (PEP) sets whose parametrizations all involve \underline{$<r$ variables}. Applying the induction hypothesis to these new (PEP) sets, the proof is completed. 
\end{proof}

\begin{rema}[effectiveness]\label{effective}
	Taking more care in the proof above, one can actually make the constant $C$ in Theorem \ref{minspecial} to be effectively computable in terms of  $\mathbf{f}$. 
	
	However, our approach can say little about the effectiveness of the exceptional set $A$ (and even less about the effectiveness of $f^{-1}(A)$) of Theorem \ref{minspecial}, not even its cardinality. As a consequence, in the context of Theorem \ref{firstmainthm}, we are unable to explicitly compute a constant $a>0$ such that ${\Big| \{P\in \Sigma;H_{\text{aff}}(P)\leq H\}\Big|<a \cdot (\log H)^r}$ for sufficiently large $H$.

	This is in sharp contrast with the situation of 
	$S$-unit equations, e.g. $x_1+\cdots+x_s=1, x_i\in \mathcal{O}_S^*$, whose {\bf number} of non-degenerate solutions can be effectively boundable from above, cf. the seminal paper \cite{effective02} and its refinement \cite{effective09}, see also \cite{effectiveremond} for another approach.  
	
         In fact, we prove in \cite{CDRRZ} that an effective bound for
         the cardinality of $A$ in Theorem \ref{minspecial} would yield an explicit bound for the {\bf size} of non-degenerate solutions to an arbitrary $S$-unit equation, which is still an open problem. Thus, the non-effectiveness of the exceptional set $A$ of Theorem \ref{minspecial} lies deeply in the openness of the difficult effectiveness problem of the Schlickewei-Schmidt subspace theorem.
\end{rema}

We now turn to the discussion of the second main result, Theorem \ref{secondmainthm}. The proof of Theorem \ref{secondmainthm}, being non-trivial though, is roughly analogous to that of Theorem 1.1 and Corollary 1.2 of \cite{CRRZ}. In particular, the theory of generic elements, cf. \cite{PrR1},\cite{PrR2}, \cite{PrR3}, will be needed again.  We will omit the full verification here for simplicity of presentation. In the following we will only highlight two new ingredients in the proof of Theorem \ref{secondmainthm} and postpone detailed arguments in \cite{CDRRZ}.

The first one is a consequence of Theorem \ref{firstmainthm}.

\begin{coro}\label{Cor}
	Let $K$ be a number field, $\Sigma \subseteq \mathrm{GL}_n(K)$ be a (PEP) subset, and let $g\in \mathrm{GL}_n(K)$ be a non-semi-simple matrix.
	Then there is an $m \in \mathbb{N}$ such that $\Big| \{n\in \mathbb{N};n\leq N\text{ and }g^n\in \Sigma\}\Big|
	=O(\log^mN)$ as $N\to \infty$ .
\end{coro}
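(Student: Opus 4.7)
The strategy is to exploit the non-semi-simplicity of $g$ by isolating its non-trivial unipotent factor, whose powers have only polynomial (hence logarithmic height) growth in $n$, in sharp contrast to the full orbit $\{g^n\}$ which typically has exponentially growing height. The plan is to absorb the semi-simple part $g_s$ into an auxiliary PEP set at the cost of one extra exponential variable, converting the condition $g^n \in \Sigma$ into a condition on $g_u^n$ landing in a larger PEP set; Theorem \ref{firstmainthm} then yields the desired polylogarithmic bound.

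More precisely, write $g = g_s g_u = g_u g_s$ for the Jordan decomposition of $g$ (with $g_s, g_u \in \mathrm{GL}_n(K)$), where $g_s$ is semi-simple and $g_u$ is unipotent; by hypothesis $g_u \neq \mathrm{id}$, so $n \mapsto g_u^n$ is injective. Define $\Sigma' := \Sigma \cdot \langle g_s^{-1} \rangle \subset \mathrm{GL}_n(K)$. Diagonalizing $g_s = PDP^{-1}$ over $\overline{K}$, the entries of $g_s^{-k}$ become purely exponential polynomials in the integer $k$, with bases the inverse eigenvalues of $g_s$ lying in $\overline{K}^\times$. Combining entrywise with the PEP parametrization $\mathbf{f}$ of $\Sigma$ in $r$ variables, one checks that $\Phi(\mathbf{m}, k) := \mathbf{f}(\mathbf{m}) \cdot g_s^{-k}$ is a PEP parametrization of $\Sigma'$ in $r+1$ variables. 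Since $g_s$ and $g_u$ commute, whenever $g^n \in \Sigma$ we have
\[
g_u^n = g^n \cdot g_s^{-n} \in \Sigma \cdot \langle g_s^{-1} \rangle = \Sigma',
\]
and therefore $\#\{n \leq N : g^n \in \Sigma\} \leq \#\{n \leq N : g_u^n \in \Sigma'\}$.

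Writing $g_u = I + M$ with $M$ nilpotent of index $d$, the finite binomial expansion $g_u^n = \sum_{k=0}^{d-1} \binom{n}{k} M^k$ shows that every entry of $g_u^n$ is a polynomial in $n$ of degree at most $d-1$ with fixed algebraic coefficients, the non-archimedean contributions being bounded uniformly in $n$ because the $\binom{n}{k}$ are rational integers. Hence $h_{\mathrm{aff}}(g_u^n) \leq (d-1)\log n + O(1)$. Since $g_u$ has infinite order, the distinct points $g_u^n$ (for $n \leq N$) lying in $\Sigma'$ all have affine height at most $O(N^{d-1})$, and applying Theorem \ref{firstmainthm} to the $(r+1)$-variable PEP set $\Sigma'$ yields a count of $O((\log N)^{r+1})$, so one may take $m = r+1$.

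The main obstacle, though of a somewhat bookkeeping nature, is the verification that $\Sigma'$ is genuinely a PEP subset of $\mathbb{A}^{n^2}_K$: the bases of $\Phi$ are the bases of $\mathbf{f}$ together with the inverse eigenvalues of $g_s$, and the exponents are linear forms in the $r+1$ variables $(\mathbf{m}, k)$ with rational integer coefficients, exactly as required by the definition of PEP. Once this construction is carried out, combined with the elementary height estimate for $g_u^n$, the conclusion is a direct application of the First Main Theorem.
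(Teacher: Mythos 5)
Your proposal is correct and follows essentially the same route as the paper: Jordan decomposition, absorbing the semi-simple part into the auxiliary (PEP) set $\Sigma'=\Sigma\cdot\langle g_s\rangle$, the binomial expansion showing the entries of $g_u^n$ have polynomially bounded height, and then Theorem \ref{firstmainthm}. You supply slightly more detail than the paper's sketch (the explicit $(r+1)$-variable parametrization of $\Sigma'$ and the injectivity of $n\mapsto g_u^n$, which is indeed the point where non-semi-simplicity of $g$ is used), but the argument is the same.
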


\begin{proof}
    Write $g=g_ug_s$ for the Jordan decomposition of $g$ with $g_u$ unipotent, $g_s$ semisimple and $[g_u,g_s]=1$. Note that the condition $g^n=(g_ug_s)^n \in \Sigma$ implies that $g_u^n \in \Sigma \cdot\langle  g_s\rangle$, and that the subset $\Sigma'= \Sigma\cdot \langle g_s\rangle \subseteq \mathrm{GL}_n(K)$ is also a (PEP) set. So, we reduce to proving the result for $g_u$. We may, therefore, assume that $g$ is unipotent.
   
	By writing $g=id+g_N$ with $g_N$ nilpotent, and considering the binomial expansion of ${g^n=(id+g_N)^n}$, it is easy to check that the height of the coefficients of $g^n$ has polynomial growth in $n$. Due to Theorem \ref{firstmainthm}, the elements of height $\leq H$ in the (PEP) set $\Sigma$ grow at most as some power of $\log H$ as $H \to \infty$. This proves the corollary. 
\end{proof}

The following second requires a not entirely trivial argument which uses the finiteness of non-degenerate solutions to $S$-unit equations (cf. \cite{CDRRZ}).

\begin{lemma}
	Let $f:\mathbb{Z}^r \to K^*$ be a (PEP). If its image is a multiplicative subgroup of $K^*$, then this subgroup is finitely generated.
\end{lemma}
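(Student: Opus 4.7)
The plan is to embed the image $H := f(\mathbb{Z}^r)$ into the unit group of a fixed finitely generated integral domain and then appeal to the finite generation of such unit groups. Let $R\subset\overline{K}$ be the $\mathbb{Z}$-subalgebra generated by the finitely many symbols $\{a_j^{\pm 1},\lambda_i^{\pm 1}\}$ attached to $f$; this is a finitely generated integral domain. Each purely exponential monomial $a_j\lambda_1^{l_{1,j}(\mathbf{n})}\cdots\lambda_k^{l_{k,j}(\mathbf{n})}$ is a product of units of $R$ and hence itself lies in $R^*$; and, closing under addition, $f(\mathbf{n})\in R$ for every $\mathbf{n}\in\mathbb{Z}^r$.

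Next I would exploit the group hypothesis. Since $H$ is a subgroup of $K^*$, for each $\mathbf{n}$ there exists $\mathbf{n}'$ with $f(\mathbf{n}')=f(\mathbf{n})^{-1}$; applying the preceding observation to $\mathbf{n}'$ gives $f(\mathbf{n})^{-1}\in R$. Hence $f(\mathbf{n})\in R^*$, so $H\subseteq R^*$. Now $R^*$ is a finitely generated abelian group: this is Dirichlet's $S$-unit theorem in the number-field case, and Roquette's theorem on units of finitely generated integral domains over $\mathbb{Z}$ in the general characteristic-zero setting. Subgroups of finitely generated abelian groups being finitely generated, $H$ is finitely generated.

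The main obstacle, and my main hesitation, is that the argument above appeals only to the classical unit theorem, whereas the text advertises the proof as hinging on the finiteness of non-degenerate solutions to $S$-unit equations. This suggests either that the authors are proving a strictly more refined statement --- for instance quantitative control on the rank of $H$ or on the fibers $f^{-1}(h)$ --- for which Evertse's theorem is essential, or that they follow a different route: expand the identity $f(\mathbf{n})\cdot f(\mathbf{n}')=1$ forced by the group condition as an $S$-unit equation with $e^2$ summands equal to $1$, and descend through its degenerate sub-equations, using Evertse-type finiteness of non-degenerate solutions at each stage to constrain the pairs $(\mathbf{n},\mathbf{n}')$ and thereby the image. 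The hard part under that strategy is the bookkeeping for the descent on degenerate sub-sums, which is exactly where the structure of the bases $\lambda_i$ being (after reduction, as in the proof of Theorem~\ref{minspecial}) multiplicatively independent would presumably be used.
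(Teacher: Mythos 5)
Your argument is correct for the lemma as literally stated, and it takes a genuinely different (and more elementary) route than the one the paper advertises. Note first that this announcement does not actually contain a proof of the lemma: it is deferred to \cite{CDRRZ}, with only the remark that the argument there uses the finiteness of non-degenerate solutions to $S$-unit equations. Your proof avoids that machinery entirely. Every step checks out: since the exponents $l_{i,j}(\mathbf{n})$ are rational integers, each monomial $a_j\lambda_1^{l_{1,j}(\mathbf{n})}\cdots\lambda_k^{l_{k,j}(\mathbf{n})}$ is indeed a unit of $R=\mathbb{Z}[a_j^{\pm1},\lambda_i^{\pm1}]$, so $f(\mathbb{Z}^r)\subset R$; closure of the image under inversion is precisely the subgroup hypothesis, which upgrades this to $f(\mathbb{Z}^r)\subset R^{*}$; and $R^{*}$ is finitely generated by Dirichlet's $S$-unit theorem when the data lie in a number field (the case to which the paper reduces by specialization) or by Roquette's theorem for finitely generated domains over $\mathbb{Z}$ in general, so the image, being a subgroup of a finitely generated abelian group, is finitely generated. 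What the heavier input (Evertse/Laurent-type finiteness of non-degenerate solutions) presumably buys in \cite{CDRRZ} is finer structural information than bare finite generation --- for instance, that away from finitely many degenerate cosets $f$ agrees with a single monomial and hence is essentially a homomorphism, which controls fibres and identifies the generators --- and such a refinement may be what is actually needed in the proof of Theorem \ref{secondmainthm}. But none of that is required for the statement as given, so your hesitation in the second paragraph, while reasonable, is unfounded: the classical unit theorems suffice, and the speculative descent through degenerate subsums is unnecessary here.
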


Details of the proofs in this section as well as relevant examples and remarks will appear in \cite{CDRRZ}.

\vskip2mm

\noindent {{\small {\bf Acknowledgements.} The first author is partially funded by the Italian PRIN 2017 ``Geometric, algebraic and analytic methods in arithmetic''. The second author was a guest at the Max Planck Institute for Mathematics when working on this article. He thanks the Institute for their hospitality and their financial support. The fourth author is supported by the Institute for Advanced Study and the National Science Foundation under Grant No. DMS-1926686.}}

\bibliographystyle{amsplain}

\end{document}